\def\NZQ{\mathbb}               
\def\ZZ{{\NZQ Z}}
\def\RR{{\NZQ R}}
\def\eb{{\bold e}}
\def\opn#1#2{\def#1{\operatorname{#2}}} 
\opn\pd{pd} 
\opn\rk{rk}
\opn\rank{rank}
\opn\depth{depth} 
\opn\grade{grade} 
\opn\height{height}
\opn\embdim{emb\,dim} 
\opn\codim{codim}
\opn\Tr{Tr} 
\opn\bigrank{big\,rank}
\opn\lcm{lcm}
\opn\reg{reg} 
\opn\ini{in} 
\opn\size{size}
\opn\mult{mult}
\opn\dist{dist}
\opn\cone{cone}
\opn\lex{lex}
\opn\rev{rev}
\opn\div{div}
\opn\Div{Div}
\opn\cl{cl}
\opn\Cl{Cl}
\opn\Syz{Syz} \opn\Im{Im} \opn\Ker{Ker} \opn\Coker{Coker}
\opn\Hom{Hom} \opn\Tor{Tor} \opn\Ext{Ext}
\opn\End{End} \opn\Aut{Aut} \opn\id{id} \opn\nat{nat}
\opn\mod{mod} \opn\ord{ord}
\opn\aff{aff} \opn\con{conv} \opn\relint{relint} \opn\st{st}
\opn\lk{lk} \opn\cn{cn} \opn\core{core} \opn\vol{vol}
\opn\link{link} \opn\star{star}
\def\Ac{{\mathcal A}}
\def\Hc{{\mathcal H}}
\def\Gc{{\mathcal G}}
\def\Fc{{\mathcal F}}
\def\Pc{{\mathcal P}}
\def\Qc{{\mathcal Q}}
\newtheorem{Theorem}{Theorem}[section]
\newtheorem{Lemma}[Theorem]{Lemma}
\newtheorem{Corollary}[Theorem]{Corollary}
\newtheorem{Proposition}[Theorem]{Proposition}
\newtheorem{Remark}[Theorem]{Remark}
\begin{document}
\title{Non-normal very ample polytopes and their holes}
\author{Akihiro Higashitani}
\thanks{
{\bf 2010 Mathematics Subject Classification:}
Primary 52B20; Secondary 52B12, 14M25. \\ 
\;\;\;\; {\bf Keywords:} integral convex polytope, normal, very ample, hole. \\
\;\;\;\; The author is supported by JSPS Research Fellowship for Young Scientists. }
\address{Akihiro Higashitani,
Department of Pure and Applied Mathematics,
Graduate School of Information Science and Technology,
Osaka University,
Toyonaka, Osaka 560-0043, Japan}
\email{a-higashitani@cr.math.sci.osaka-u.ac.jp}

\begin{abstract}
In this paper, 
we show that for given integers $h$ and $d$ with $h \geq 1$ and $d \geq 3$, 
there exists a non-normal very ample integral convex polytope of dimension $d$ 
which has exactly $h$ holes. 
\end{abstract}

\maketitle

\section*{Introduction}

The normality and the very ampleness of integral convex polytopes are 
of importance in the several points of view, 
e.g., not only combinatorics on convex polytopes 
but also toric geometry and commutative algebra. 
In particular, normal or very ample integral convex polytopes appearing 
in the context of toric geometry are well studied (cf. \cite{BHHLNPSS, Bruns, Ogata02, Ogata03, Ogata11}). 
To determine whether a given integral convex polytope is normal (very ample) or not 
is a fundamental but fascinating problem. (See \cite{HHKO, OH98, OH10}.) 
In this paper, we will show the existence of 
non-normal very ample integral convex polytopes for general dimensions.

Let $\Pc \subset \RR^{N}$ be an integral convex polytope, which is a convex polytope 
all of whose vertices are contained in $\ZZ^N$, of dimension $d$. 
Define $\widetilde{\Pc} \subset \RR^{N+1}$ to be the convex hull of the points 
$(\alpha,1) \in \RR^{N+1}$ with $\alpha \in \Pc$ and let $\Ac_\Pc = \widetilde{\Pc} \cap \ZZ^{N+1}$. 
We say that $\Pc$ is {\em normal} if $\Pc$ satisfies 
$$ \RR_{\geq 0}\Ac_\Pc \cap \ZZ \Ac_\Pc=\ZZ_{\geq 0}\Ac_\Pc.$$ 
Moreover, we say that $\Pc$ is {\em very ample} if the set 
$$ (\RR_{\geq 0}\Ac_\Pc \cap \ZZ \Ac_\Pc) \setminus \ZZ_{\geq 0}\Ac_\Pc $$ is finite 
and we call the elements of 
$(\RR_{\geq 0}\Ac_\Pc \cap \ZZ \Ac_\Pc) \setminus \ZZ_{\geq 0}\Ac_\Pc$ the {\em holes} of $\Pc$. 
In particular, when $\Pc$ is normal, $\Pc$ is also very ample.

In addition, for a positive integer $k$, 
we say that $\Pc$ is {\em $k$-normal} if for each $n=k,k+1,\ldots$ 
and for each $\alpha \in n \Pc \cap \ZZ^N$, where $n \Pc=\{ \, n \alpha \, : \, \alpha \in \Pc \, \}$, 
there exist $n$ integer points $\alpha_1, \ldots, \alpha_n$ 
belonging to $\Pc \cap \ZZ^N$ 
such that $\alpha = \alpha_1 + \cdots + \alpha_n$. 

Let us assume 
$$
N=d \;\;\text{and} \;\; \ZZ \Ac_\Pc = \ZZ^{d+1}. 
$$
Then $\Pc$ is normal if and only if $\Pc$ is 1-normal, 
which is also called that $\Pc$ has the {\em integer decomposition property}. 
Moreover, $\Pc$ is very ample if and only if 
$\Pc$ is $k$-normal for some sufficiently large positive integer $k$. 
A definition of very ampleness described in, e.g., \cite{BruGub09, CLS, Ogata11} 
is also equivalent to ours. See \cite[Exercise 2.23]{BruGub09}.

It often happens that for some class of integral convex polytopes, 
its normality is equivalent to its very ampleness. 
In other words, a very ample integral convex polytope is always normal among some class of polytopes. 
For example, edge polytope is a typical example (cf \cite{OH98}). 
Thus, the following is a quite natural question: 

\smallskip

\begin{center}
Does there exist an integral convex polytope which is not normal but very ample ?
\end{center}

\smallskip

In \cite[Example 5.5.1]{BruGub02}, Bruns and Gubeladze succeed to giving 
the first example of a non-normal very ample integral convex polytope, 
which is of dimension 5 and can be obtained from a triangulation of a real projective plane. 
Recently, they provide the second example in \cite[Exercise 2.24]{BruGub09}, 
which is of dimension 3. 
Moreover, in \cite[Section 2]{Ogata11}, Ogata generalizes the second example 
and establishes infinitely many non-normal very ample integral convex polytopes of dimension 3. 

On the other hand, for general dimensions, 
no example of non-normal very ample integral convex polytopes is known. 
In \cite[Section 1]{Ogata11}, such polytopes have been proposed. But, unfortunately, 
it turns out that \cite[Proposition 1]{Ogata11} does not hold in general. 
In fact, for example, let $n=4$ and $q=2$. Then \cite[Proposition 1]{Ogata11} says that 
\begin{eqnarray*}
&&P_2=\con(\{(0,0,0,0),(1,0,0,0),(0,1,0,0),(0,0,1,0),(1,1,1,2)\} \cup \\
&&\quad\quad\quad\quad\quad 
\{(0,0,0,1),(1,0,0,1),(0,1,0,1),(0,0,1,1),(1,1,1,3)\}) \subset \RR^4
\end{eqnarray*}
is very ample, while it is not normal by \cite[Proposition 2]{Ogata11}. 
Remark that $\ZZ \Ac_{P_2}=\ZZ^5$. 
However, for {\em every} positive integer $m$, since we have 
$$(m,1,1,1,m+1)=\frac{2m-1}{2}(1,0,0,0,1)+\frac{1}{2}(0,1,0,0,1)+
\frac{1}{2}(0,0,1,0,1)+\frac{1}{2}(1,1,1,2,1),$$ 
one has $(m,1,1,1,m+1) \in \RR_{\geq 0} \Ac_{P_2} \cap \ZZ^5$, 
whereas one can see that $(m,1,1,1,m+1) \not\in \ZZ_{\geq 0} \Ac_{P_2}$. 
This shows that there exist infinitely many holes. 
That is to say, this is NOT very ample. 




In this paer, we present a non-normal very ample integral convex polytope 
for general dimensions having an additional property. 
The following is our main theorem of this paper. 
\begin{Theorem}\label{main}
Let $h$ and $d$ be integers with $h \geq 1$ and $d \geq 3$. 
Then there exists a non-normal very ample integral convex polytope of dimension $d$ 
which has exactly $h$ holes. 
\end{Theorem}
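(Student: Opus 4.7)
The plan is to construct, for each pair $(h,d)$ with $h \geq 1$ and $d \geq 3$, an explicit integral convex polytope $\Pc_{h,d} \subset \RR^d$ of dimension $d$ for which the following three properties can be verified directly: (i) $\ZZ\Ac_{\Pc_{h,d}} = \ZZ^{d+1}$, so that the definitions of normality and very ampleness given in the introduction apply without passing to a sublattice; (ii) $\Pc_{h,d}$ is very ample, i.e.\ the set of holes is finite; and (iii) the number of holes is exactly $h$.

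For the base dimension $d = 3$, I would build on the known three-dimensional constructions of Bruns--Gubeladze \cite{BruGub09} and Ogata \cite{Ogata11}. In each of these, the polytope carries a ``steep'' vertex whose vertex cone is not saturated, and the failure produces a single hole. My proposal for producing exactly $h$ holes is to assemble $h$ such local defects inside one polytope---either by taking a polytope with $h$ disjoint steep vertices placed far apart so that their contributions do not interact, or by varying the steepness at one vertex so that $h$ lattice points, rather than one, fail to lie in $\ZZ_{\geq 0}\Ac$. The decisive requirement, made visible by the counterexample to \cite[Proposition 1]{Ogata11} displayed in the introduction, is that along each unbounded facet of the cone $\RR_{\geq 0}\Ac_{\Pc_{h,3}}$ the associated lower-dimensional sub-semigroup must already be saturated. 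Once this ``facet saturation'' is in place, all potential holes are confined to a bounded defect region near the offending vertex, and the infinite families $(m,1,1,1,m+1)$ of the introduction cannot appear.

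To pass from dimension $3$ to arbitrary $d \geq 4$, I would use a product or pyramid construction. Concretely, one takes $\Pc_{h,d} = \Pc_{h,3} \times \Delta$ with $\Delta \subset \RR^{d-3}$ a well-behaved normal lattice polytope (for instance a unit cube or a standard simplex) chosen so that its associated homogenized semigroup is saturated. Under this operation the set $\Ac_{\Pc_{h,d}}$ factors in such a way that the obstruction to writing a lattice point as a nonnegative integer combination factors through the first coordinate block, producing a natural bijection between the holes of $\Pc_{h,d}$ and those of $\Pc_{h,3}$. Property (i) reduces to a direct check that the vertex coordinates span $\ZZ^{d+1}$.

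The main obstacle is step (ii) for the three-dimensional base case, since one must exclude the appearance of any unbounded family of the type recalled in the introduction. I would carry this out by explicit polyhedral computation: write down the facet-defining inequalities of the cone $\RR_{\geq 0}\Ac_{\Pc_{h,3}}$, identify its unbounded two-dimensional faces, and for each such face verify that every lattice point lying sufficiently far from the apex already belongs to $\ZZ_{\geq 0}\Ac_{\Pc_{h,3}}$. The finite residual defect region is then enumerated point by point, and a free parameter in the construction is tuned to match any prescribed value of $h$.
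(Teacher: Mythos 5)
There is a genuine gap, and it sits exactly at the step you use to raise the dimension. Your product construction $\Pc_{h,d}=\Pc_{h,3}\times\Delta$ with $\Delta\subset\RR^{d-3}$ normal does preserve \emph{very ampleness} (unlike the pyramid, which the paper's Remark shows creates infinitely many holes), but your claimed ``natural bijection between the holes of $\Pc_{h,d}$ and those of $\Pc_{h,3}$'' is false: the product \emph{multiplies} the number of holes. Indeed, if $(\alpha,n)$ is a hole of $\Pc_{h,3}$, then for \emph{every} $\beta\in n\Delta\cap\ZZ^{d-3}$ the point $(\alpha,\beta,n)$ lies in $\RR_{\geq 0}\Ac_{\Pc_{h,d}}\cap\ZZ^{d+1}$, and it cannot lie in $\ZZ_{\geq 0}\Ac_{\Pc_{h,d}}$, since any decomposition into $n$ lattice points of $\Pc_{h,3}\times\Delta$ would project to a decomposition of $\alpha$ into $n$ lattice points of $\Pc_{h,3}$. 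Conversely (since $\Delta$ is normal) these are all the holes. Hence each hole of the base at degree $n$ contributes exactly $|n\Delta\cap\ZZ^{d-3}|$ holes of the product. Since holes occur only in degree $n\geq 2$ and $\Delta$ must be $(d-3)$-dimensional, this multiplier is at least $\binom{n+d-3}{d-3}\geq 3$ for every $d\geq 4$; in particular the values $h=1$ and $h=2$ are unreachable in dimension $\geq 4$ by this route, so the ``exactly $h$ holes'' part of the theorem cannot be obtained this way. Your $d=3$ base case is also only a plan (``tune a free parameter''), though that part is at least consistent with what is known: the paper's Remark records that $Q_{h+3}$ from \cite{BHHLNPSS} already has exactly $h$ holes in dimension $3$.

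By contrast, the paper does not induct on dimension at all: it writes down a bespoke $d$-dimensional polytope $\Pc_{h,d}$ for every pair $(h,d)$, computes its facet description, proves all facets are normal (via a Gr\"obner basis argument for the facet $\Fc_0$ and unimodularity for the rest), exhibits the $h$ holes explicitly in degree $2$, and then proves $3$-normality by an inductive subtraction argument to show there are no further holes. If you want to keep a dimension-raising strategy, you would need an operation that provably preserves the hole count exactly --- neither the pyramid (which destroys very ampleness) nor the product (which inflates the count) does so --- and that appears to be precisely the difficulty the paper's direct construction is designed to circumvent.
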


Let $h$ and $d$ be as above and let 
\begin{align*}
&u_i=
\begin{cases}
{\bf 0}, \;\;\;\;\;\;\;\; &i=1, \\
\eb_d, &i=2, \\
\eb_2+\cdots+\eb_{d-1}, &i=3, \\
h(\eb_2+\cdots+\eb_{d-1}+\eb_d), &i=4, \\
(h-1)(\eb_2+\cdots+\eb_{d-1})+h\eb_d, &i=5, \\
h(\eb_2+\cdots+\eb_{d-1})+(h-1)\eb_d, &i=6, \\
\eb_1+4\eb_d, &i=7, \\
\eb_1+5\eb_d, &i=8, \\
\eb_1+\eb_2+\cdots+\eb_{d-1}, &i=9, \\
\eb_1+\eb_2+\cdots+\eb_{d-1}+\eb_d, &i=10, 
\end{cases}\\
&v_i=\eb_i,  \;\;\;\;\;\quad\;\;\;\; i=2,\ldots,d-1, \\
&v_i'=\eb_i+\eb_d, \;\;\;\;\; i=2,\ldots,d-1, 
\end{align*}
where ${\bf 0}=(0,\ldots,0) \in \RR^d$ and $\eb_1,\ldots,\eb_d$ are the unit coordinate vectors of $\RR^d$. 
We define the integral convex polytope $\Pc_{h,d} \subset \RR^d$ by setting the convex hull of 
$$\{u_1, \ldots, u_{10}\} \cup \{v_i, v_i' : i=2,\ldots,d-1 \}.$$ 

In this paer, 
we will show that $\Pc_{h,d}$ enjoys the required properties, 
i.e., this is a non-normal very ample integral convex polytope 
of dimension $d$ which has exactly $h$ holes. 
It is immediate that $\ZZ (\Pc_{h,d} \cap \ZZ^d)=\ZZ^d$. 
Thus $\dim (\Pc_{h,d}) =d$ and $\ZZ \Ac_{\Pc_{h,d}} = \ZZ^{d+1}$. 

The strategy of our proof is as follows. 
After calculating all the facets of $\Pc_{h,d}$, 
we will first prove that $\Pc_{h,d} \cap \{(x_1,\ldots,x_d) \in \RR^d : x_1=0 \}$ 
is normal by using theory of Gr\"obner basis and 
the remaining facets of $\Pc_{h,d}$ are also normal in Section \ref{normality}. 
In Section \ref{Hilb}, we will analyze $2\Pc_{h,d} \cap \{(x_1,\ldots,x_d) \in \RR^d : x_1=1 \}$ 
and find $h$ holes of $\Pc_{h,d}$, which implies that $\Pc_{h,d}$ is not normal. 
At last, Section \ref{nohole} is devoted to showing that 
there is no hole except for such $h$ holes, also forcing $\Pc_{h,d}$ is very ample. 

\begin{Remark}{\em 
When $d=3$, there already exists a non-normal very ample integral convex polytope having exactly $h$ holes. 
In \cite[Example 15]{BHHLNPSS}, a non-normal very ample integral convex polytope 
$$Q_k=\con(\{{\bf 0}, \eb_1,\eb_2,\eb_3,\eb_1+\eb_3,\eb_2+\eb_3, \eb_1+\eb_2+k\eb_3,\eb_1+\eb_2+(k+1)\eb_3\}) \subset \RR^3$$
of dimension 3 is described. For a given positive integer $h$, let $k=h+3$. 
Then this polytope has $h$ holes. 

Now, remark that higher dimensional polytopes obtained by taking pyramid over $Q_k$ 
do not preserve very ampleness. 
In fact, for a positive integer $h$, let us consider an integral convex polytope 
$Q'=\con(\{(\alpha, 1) \in \RR^4: \alpha \in Q_{h+3}\} \cup \{{\bf 0}\})$ of dimension 4. 
Since $(1,1,3,2)$ is a hole of $Q_{h+3}$, $(1,1,3,2,2)$ is also a hole of $Q'$. 
Moreover, for every integer $m$ with $m \geq 2$, $(1,1,3,2,m)=(1,1,3,2,2)+(m-2)(0,0,0,0,1)$ 
is also a hole of $Q'$. This says the non-very ampleness of $Q'$. 
}\end{Remark}


\section{Normality of facets of $\Pc_{h,d}$}\label{normality}
In this section, we verify that the facets of $\Pc_{h,d}$ are all normal, 
which we shall use in Section \ref{nohole}.

For a hyperplane $\Hc \subset \RR^d$ defined by the equality 
$a_1x_1+\cdots+a_dx_d = b$, we write $\Hc^{(+)}$ (resp. $\Hc^{(-)}$)
for the closed half space defined by the inequality $a_1x_1+\cdots+a_dx_d \leq b$ 
(resp. $a_1x_1+\cdots+a_dx_d \geq b$). 
We define ten types of hyperplanes as follows: 
\begin{itemize}
\item[$\Hc_0$ :] $x_1 = 0$, 
\item[$\Hc_1$ :] $x_d = 0$, 
\item[$\Hc_{2,i}$ :] $-x_i=0$, 
\item[$\Hc_{3,i}$ :] $ - (d-4)x_i + \sum_{j\not=i, 2 \leq j \leq d-1}x_j-x_d=1$, 
\item[$\Hc_{4,i}$ :] $4x_1 - 4x_i-x_d = 0$, 
\item[$\Hc_{5,i}$ :] $-4x_1 - x_i + x_d = 1$, 
\item[$\Hc_{6,i}$ :] $x_1 - (d-3)x_i + \sum_{j\not=i, 2 \leq j \leq d-1}x_j=1$, 
\item[$\Hc_{7,i}$ :] $(5h-5)x_1-((d-3)(5h-1)-4)x_i+(5h-1)\sum_{j\not=i, 2 \leq j \leq d-1}x_j + x_d = 5h$, 
\item[$\Hc_{8,i}$ :] $(h-5)x_1-(d-3)(h-1)x_i+(h-1)\sum_{j\not=i, 2 \leq j \leq d-1}x_j + x_d =h$, 
\item[$\Hc_{9,i}$ :] $(h-1)x_1-((d-3)h-1)x_i+h\sum_{j\not=i, 2 \leq j \leq d-1}x_j = h$, 
\end{itemize}
%
%
where $i=2,\ldots,d-1$. Then each hyperplane above is a supporting hyperplane of $\Pc_{h,d}$. 
Moreover, some routine works unable us to show that 
there is no facet except for the facets defined by the above $(8(d-2)+2)$ supporting hyperplanes. Hence, 
\begin{eqnarray}\label{facets}
\Pc_{h,d} = \Hc_0^{(-)} \cap \Hc_1^{(-)} \cap 
\left(\bigcap_{\substack{2 \leq j \leq 9, \\ 2 \leq i \leq d-1}} \Hc_{j,i}^{(+)} \right). 
\end{eqnarray}

Let $\Fc_0,\Fc_1,\Fc_{j,i}$, where $j=2,\ldots,9$ and $i=2,\ldots,d-1$, 
be facets of $\Pc_{h,d}$ defined by the corresponding hyperplanes 
$\Hc_0,\Hc_1$ or $\Hc_{j,i}$.


\bigskip

We prove the normality of $\Fc_0$. 
We employ some techniques using Gr\"obner basis. 
We refer the readers to \cite{HerzogHibi} for fudamental materials on Gr\"obner basis. 

\smallskip

Let 
\begin{align*}
&u_{3,j}=\frac{(h-1-j)u_3+ju_6}{h-1}=(j+1)(\eb_2+\cdots+\eb_{d-1})+j\eb_d, \;\; j=0,1,\ldots,h-1, \\
&u_{2,j}=\frac{(h-1-j)u_2+ju_5}{h-1}=j(\eb_2+\cdots+\eb_{d-1})+(j+1)\eb_d, \;\; j=0,1,\ldots,h-1, \\
&u_{1,j}=\frac{(h-j)u_1+ju_4}{h}=j(\eb_2+\cdots+\eb_{d-1})+j\eb_d, 
\;\;\;\;\;\;\;\quad\quad\quad j=0,1,\ldots,h. 
\end{align*}
Then $u_{3,1},\ldots,u_{3,h-2},u_{2,1},\ldots,u_{2,h-2},u_{1,1},\ldots,u_{1,h-1}$ are 
all the integer points contained in $\Pc_{h,d} \cap \ZZ^d$ except for the vertices. 
Note that $u_{3,0}=u_3, u_{3,h-1}=u_6, u_{2,0}=u_2, u_{2,h-1}=u_5, u_{1,0}=u_1$ and $u_{1,h}=u_4$. 

\smallskip

Let $A_{h,d} \in \ZZ^{d \times (2(d-2)+3h+1)}$ be an integer matrix of the form 
\begin{eqnarray*}
\big( v_2^*, v_2'^*, \cdots, v_{d-1}^*, v_{d-1}'^*, 
u_{3,0}^*, \ldots, u_{3,h-1}^*, 
u_{2,0}^*, \ldots, u_{2,h-1}^*, u_{1,0}^*, \ldots, u_{1,h}^* \big), 
\end{eqnarray*}
where $v^*=\eb_1+v$ for an integer point $v \in \ZZ^d$. 
This is nothing but a {\em configuration} arising from $\Fc_0$. 
Let $K[T]=K[t_1,t_2,\ldots,t_d]$ be the polynomial ring in 
$d$ variables over a field $K$. Then the toric ring of $A_{h,d}$ 
is the subalgebra $K[A_{h,d}]$ of $K[T]$ which is generated by the monomials 
\begin{align*}
&t_1t_2, t_1t_2t_d, \ldots, t_1t_{d-1}, t_1t_{d-1}t_d, \;\;\;\; 
t_1t_2 \cdots t_{d-1}, t_1t_2^2 \cdots t_{d-1}^2t_d, \ldots, t_1t_2^h \cdots t_{d-1}^ht_d^{h-1}, \\
&t_1t_d, t_1t_2 \cdots t_{d-1}t_d^2, \ldots, t_1t_2^{h-1} \cdots t_{d-1}^{h-1}t_d^h, \;\;\;\; 
t_1, t_1t_2 \cdots t_{d-1}t_d, \ldots, t_1t_2^h \cdots t_{d-1}^ht_d^h. 
\end{align*}
Let $K[X,Y,Z,W]=K[x_1,\ldots,x_{2d-4},y_1,\ldots,y_h,z_1,\ldots,z_h,w_0,w_1,\ldots,w_h]$ 
be the polynomial ring in $2d+3h-3$ variables over $K$ and define the surjective ring homomorphism 
$\pi : K[X,Y,Z,W] \rightarrow K[A_{h,d}]$ by setting 
\begin{align*}
&\pi(x_{2i-1})=t_1t_{i+1}, \;\; \pi(x_{2i})=t_1t_{i+1}t_d &
\quad&\text{for}\;\; i=1,\ldots,d-2, \\
&\pi(y_j)=t_1t_2^j \cdots t_{d-1}^jt_d^{j-1}, \;\; 
\pi(z_j)=t_1t_2^{j-1} \cdots t_{d-1}^{j-1}t_d^j & &\text{for}\;\; j=1,\ldots,h, \\
&\pi(w_k)=t_1t_2^k \cdots t_{d-1}^kt_d^k & &\text{for}\;\; k=0,\ldots,h. 
\end{align*}
The toric ideal $I$ is the kernel of the map $\pi$. 
Let $<$ be the lexicographic order on $K[X,Y,Z,W]$ induced by the ordering 
$$w_h < \cdots < w_0 < z_h < \cdots < z_1 < y_h < \cdots < y_1 < 
x_{2d-4} < \cdots < x_1.$$ 

\begin{Proposition}\label{gb}
A Gr\"obner basis of $I$ with respect to $<$ consists of the sets $G_1,\ldots,G_8$ 
of the binomials, where 
\begin{align*}
&G_1=\{x_{2i-1}x_{2j}-x_{2i}x_{2j-1} : 1 \leq i < j \leq d-2\}, \\
&G_2=\{y_iy_l - y_jy_k,z_iz_l - z_jz_k 
: 1 \leq i \leq j \leq k \leq l \leq h \;\text{{\em with}}\; i+l=j+k\}, \\
&G_3=\{w_iw_l - w_jw_k : 0 \leq i \leq j \leq k \leq l \leq h \;\text{{\em with}}\; i+l=j+k\}, \\
&G_4=\{x_{2i-1}z_j-x_{2i}w_{j-1},x_{2i-1}w_j-x_{2i}y_j : 1 \leq i \leq d-2, 1 \leq j \leq h \}, \\
&G_5=\{y_iz_j-w_{i-1}w_j : 1 \leq i,j \leq h\}, \\
&G_6=\{y_iw_j-y_{i+1}w_{j-1}, z_iw_j-z_{i+1}w_{j-1} : 1 \leq i \leq h-1, 1 \leq j \leq h\}, \\
&G_7=\{x_{2i-1}y_jw_0-x_{2i}y_1y_{j-1} : 1 \leq i \leq d-2, 2 \leq j \leq h\}, \\
&G_8=\left\{\prod_{q=1}^kx_{2q-1}\prod_{q=k+1}^{d-2}x_{2q} - z_1^{d-4-k}z_2w_0^{k+1} : 0 \leq k \leq d-4 \right\} \\
&\quad\quad\quad\quad\quad\quad\quad\quad\quad\quad\quad \bigcup 
\left\{x_{2d-4}\prod_{q=1}^{d-3}x_{2q-1} - w_0^{d-3}w_1, \prod_{q=1}^{d-2}x_{2q-1} - w_0^{d-3}y_1 \right\}. 
\end{align*}
\end{Proposition}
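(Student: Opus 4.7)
The plan is to verify Buchberger's criterion for $G := G_1 \cup \cdots \cup G_8$, which requires three steps: (i) every element of $G$ lies in $I$, (ii) the leading monomials under the prescribed lex order $<$ are the first monomial written in each listed binomial, and (iii) every S-polynomial $S(f,g)$ with $f,g \in G$ reduces to $0$ modulo $G$.

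For (i), one applies $\pi$ to both monomials of each binomial and confirms equality in $K[T]$. These verifications are all of the same flavor; a typical one, for $G_5$, reads
$$
\pi(y_iz_j) = t_1^2\,\prod_{\ell=2}^{d-1} t_\ell^{\,i+j-1}\cdot t_d^{\,i+j-1} = \pi(w_{i-1}w_j),
$$
and the corresponding identities for $G_1,G_2,G_3,G_4,G_6,G_7,G_8$ follow directly from the definition of $\pi$. For (ii), under the ordering in which $x_1$ is largest and $w_h$ smallest, I compare the two monomials of each binomial variable by variable; in $G_2$, for instance, the variable $y_i$ occurs with positive exponent only on the $y_iy_l$ side, so that side is the initial, and the analysis of $G_1,G_3,G_4,G_6,G_7$ is the same within each block of variables, while in $G_5$ the initial $y_iz_j$ dominates $w_{i-1}w_j$ because $y$-variables exceed $w$-variables in $<$; the binomials in $G_8$ have initials on the $x$-side because $x$-variables exceed $y,z,w$.

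For (iii), I invoke Buchberger's coprimality criterion: whenever the initials of $f$ and $g$ share no variable, $S(f,g)\to 0$ modulo $G$ automatically. This disposes of the vast majority of pairs — for example, every pair from $G_1\times(G_2\cup G_3\cup G_5\cup G_6)$ is coprime, as are many cross-family pairs that live in disjoint variable blocks. The remaining overlaps are organized by family; in each case the S-polynomial is simplified by a short sequence of reductions, typically exchanging $x_{2i-1}$ for $x_{2i}$ via $G_1$ or $G_4$, shifting $y$- or $z$-indices toward $w$ via $G_5$ or $G_6$, or collapsing a $w$-pair to a midpoint via $G_3$.

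The main obstacle will be the S-pairs whose second factor is from $G_7$ or $G_8$: their initial monomials have degree $3$ and degree $d-2$ respectively, and they overlap with $G_1$, $G_4$, and $G_6$ in several different ways. Each such S-pair must be reduced by a multi-step rewriting, and one must verify that after clearing the $x$-part using $G_1$, $G_4$, and $G_8$, the resulting monomial in the $y,z,w$-variables is annihilated by $G_2,G_3,G_5,G_6$. Each chain is short in isolation, but the bookkeeping required to ensure that no overlap pattern is missed is the most laborious part of the argument; once these are dispatched, Buchberger's criterion yields the conclusion.
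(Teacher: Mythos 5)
Your overall strategy (Buchberger's criterion) is a legitimate way to attack the statement, but it is not the route the paper takes, and as written your argument has a genuine gap: step (iii) is never actually carried out. Steps (i) and (ii) are indeed routine, and the coprimality criterion does dispose of many pairs, but the remaining S-pairs --- those within $G_1$, $G_2$, $G_3$, $G_6$ whose initial monomials share a variable, the cross-family pairs such as $G_4$ against $G_5$ or $G_6$, and above all the pairs involving $G_7$ and the degree-$(d-2)$ binomials of $G_8$ --- are exactly where the content of the proposition lives. You describe the needed reductions only generically (``exchanging $x_{2i-1}$ for $x_{2i}$,'' ``shifting indices toward $w$,'' ``collapsing a $w$-pair to a midpoint'') and explicitly defer the bookkeeping. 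Until each overlap pattern is enumerated and each S-polynomial is exhibited as reducing to zero, nothing has been established: it is entirely possible a priori that some S-pair involving $G_8$ fails to reduce, which would mean $\Gc$ is not a Gr\"obner basis at all. So the proposal is a plan rather than a proof.

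The paper avoids S-pairs altogether by exploiting that $I$ is a toric ideal: $\ini_<(I)$ is generated by the initial monomials of the irreducible binomials $u-v\in I$, so it suffices to show that every such initial monomial $u$ lies in $\ini_<(\Gc)$. The paper assumes $u\notin\ini_<(\Gc)$ and runs a case analysis on which blocks of variables ($x$, $y$, $z$, $w$) divide $u$; the condition that $u$ is divisible by no initial monomial from $G_1,\dots,G_8$ pins down the shape of $u$ so tightly that the companion monomial $v$ (which, by irreducibility of the binomial, shares no variable with $u$) cannot have the same image under $\pi$, a contradiction. This replaces the enumeration of all overlapping pairs by a single structural analysis of the monomials outside $\ini_<(\Gc)$, which is why it is the standard device for toric ideals. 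To repair your argument you must either complete the S-pair computations in full --- in particular for every overlap with $G_7$ and $G_8$ --- or switch to this standard-monomial argument.
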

\begin{proof}
Let $\Gc=\bigcup_{i=1}^8G_i$. 
Let $\ini_<(G_i)$ denote the set of the initial monomials of all the binomials in $G_i$ with respect to $<$ 
and $\ini_<(\Gc)$ the ideal generated by all the monomials in $\bigcup_{i=1}^8\ini_<(G_i)$. 
Here the initial monomial of each binomial in $G_i$ is the first monomial. 
Since $\Gc \subset I$, we have $\ini_<(\Gc) \subset \ini_<(I)$. 
Our goal is to show $\ini_<(I) \subset \ini_<(\Gc)$. 

Fix an irreducible non-zero binomial $f=u-v \in I$ with $v < u$. 
Thus $u \in \ini_<(I)$. 
For monomials $m_1, m_2 \in K[X,Y,Z,W]$, let $m_1 \mid m_2$ (resp. $m_1 \nmid m_2$) 
denote that $m_2$ is divisible (resp. not divisible) by $m_1$. 
Suppose that $u \not\in \ini_<(\Gc)$. 

First, we assume that $x_i \nmid u$ for any $1 \leq i \leq 2d-4$. 
\begin{itemize}
\item Assume that $y_j \nmid u$ for any $1 \leq j \leq h$. 
Then, for any $i,j$, both $x_i \nmid v$ and $y_j \nmid v$ are satisfied. 
\begin{itemize}
\item When $z_j \nmid u$ for any $j$, 
the variables appearing in $u$ are only $w_k, 0 \leq k \leq h,$ 
and so is $v$. Since $u-v \in I$, it must be $u \in \ini_<(G_3)$, a contradiction. 
\item When $z_j \mid u$ for some $j$, since $u \not\in \ini_<(G_2)$, 
the variable among $z_j$ appearing in $u$ is either $z_j$ or $z_jz_{j+1}$. 
When the former case, i.e., when $u=z_j^{c_j}\prod w_k$, 
since $u \not\in \ini_<(G_6)$, we have $j=h$ or $\prod w_k=w_0^{d_1}$. 
If $j=h$, since $f$ is irreducible, only $w_k$ appears in $v$, 
which contradicts to $f \in I$. 
Similarly, if $\prod w_k=w_0^{d_1}$, then it contradicts $f \in I$. 
When the latter case, i.e., when $u=z_j^{c_j}z_{j+1}^{c_{j+1}} \prod w_k$, 
similarly, it contradicts $f \in I$. 
\end{itemize}
\item Assume that $y_j \mid u$ for some $j$. 
Then, since $u \not\in \ini_<(G_2) \cup \ini_<(G_5) \cup \ini_<(G_6)$, $u$ looks like 
$y_j^{b_j} w_0^{d_1}$, $y_j^{b_j}y_{j+1}^{b_{j+1}} w_0^{d_1}$ or $y_h^{b_h} \prod w_k$. 
In these cases, similar discussions to the previous case can be applied and 
lead a contradiction. 
\end{itemize}

Next, we assume that $x_i \mid u$ for some $i$. 
\begin{itemize}
\item When only one variable $x_i$ appears in $u$, 
$u$ looks like $x_i^{a_i} \prod y_j \prod z_j \prod w_k$. 
\begin{itemize}
\item When $i$ is even, the variables appearing in $v$ are chosen from $x_{i+1},\ldots,x_{2d-4}$, 
which obviously contradicts $f \in I$. 
\item When $i$ is odd, since $u \not\in \ini_<(G_4) \cup \ini_<(G_7)$, $u$ looks like either 
$x_i^{a_i} \prod y_j$ or $x_i^{a_i}y_1^{b_1}w_0^{d_1}$. 
When these cases, it contradicts $f \in I$. 
\end{itemize}
\item When at least $(d-1)$ distinct $x_i$'s appear in $u$, there is at least one $1 \leq q \leq d-2$ 
such that $x_{2q-1}x_{2q} \mid u$, which contradicts $f \in I$. 
\item When there are $(d-2)$ $x_i$'s in $u$ and there is no $q$ such that $x_{2q-1}x_{2q} \mid u$, 
one has $u \in \ini_<(G_8)$, a contradiction. 
When there are distinct $r$ $x_i$'s in $u$, where $2 \leq r \leq d-3$, 
and there is no $q$ such that $x_{2q-1}x_{2q} \mid u$, 
since $u \not\in \ini_<(G_1)$, $u$ looks like 
$x_{2i_1}^{a_{2i_1}} \cdots x_{2i_l}^{a_{2i_l}} x_{2i_{l+1}-1}^{a_{2i_{l+1}-1}} \cdots x_{2i_r-1}^{a_{2i_r-1}} 
\prod y_j \prod z_j \prod w_k$, where $1 \leq i_1 < \cdots <i_r \leq d-2$. 
This contradicts $f \in I$. 
\end{itemize}

Therefore, we conclude that $u$ belongs to $\ini_<(\Gc)$, as required. 
\end{proof}

\begin{Corollary}\label{cor}
The integral convex polytope $\Fc_0$ has a regular unimodular triangulation. 
In particular, $\Fc_0$ is normal. 
\end{Corollary}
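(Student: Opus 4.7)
The plan is to invoke the standard bridge between Gr\"obner bases of toric ideals and unimodular triangulations (cf.\ \cite{HerzogHibi}): if the toric ideal $I_A$ of a configuration $A$ admits a term order with respect to which its initial ideal is squarefree, then the initial ideal is the Stanley--Reisner ideal of a regular unimodular triangulation of $\con(A)$, and in particular the configuration is normal. Since a Gr\"obner basis of the toric ideal $I$ associated to $A_{h,d}$ has already been produced in Proposition \ref{gb}, the entire content of the corollary reduces to verifying that the resulting initial ideal $\ini_<(I)$ is squarefree.

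Accordingly, I would simply run through the eight families $G_1,\ldots,G_8$ and inspect the leading monomial of each binomial. In $G_1$ the leading terms $x_{2i-1}x_{2j}$ with $i<j$ involve two distinct variables. In $G_2$ and $G_3$ the chain $i \leq j \leq k \leq l$ together with $i+l=j+k$ and the non-triviality of the binomial force $i<l$, so that $y_iy_l$, $z_iz_l$ and $w_iw_l$ are squarefree. The leading terms in $G_4$, $G_5$ and $G_6$ are each a product of two variables taken from distinct groups among $x,y,z,w$, and hence trivially squarefree; the leading term $x_{2i-1}y_jw_0$ of a binomial in $G_7$ is likewise a product of three distinct variables. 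Finally, for $G_8$, every leading monomial either selects, for each $1 \leq q \leq d-2$, exactly one of $x_{2q-1}$ or $x_{2q}$, or is of the form $x_{2d-4}\prod_{q=1}^{d-3} x_{2q-1}$ or $\prod_{q=1}^{d-2} x_{2q-1}$; in every case the variables appearing are pairwise distinct.

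Having shown that $\ini_<(I)$ is generated by squarefree monomials, the standard correspondence then delivers a regular unimodular triangulation $\Delta$ of $\con(A_{h,d})$. Since $A_{h,d}$ is by construction the configuration consisting of the lifts $v^{\ast}=\eb_1+v$ of the integer points $v \in \Fc_0 \cap \ZZ^d$, and since $\ZZ A_{h,d}$ coincides with the ambient lattice in $\Hc_0$ at height one, the triangulation of $\con(A_{h,d})$ projects to a regular unimodular triangulation of $\Fc_0$. In particular $\Fc_0$ is normal, which is the desired conclusion.

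At this stage there is essentially no remaining obstacle: the substantive difficulty of the section lay in producing the Gr\"obner basis of Proposition \ref{gb}, and the present step is merely a mechanical check of squarefreeness followed by an appeal to the dictionary between squarefree initial ideals and regular unimodular triangulations.
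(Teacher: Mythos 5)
Your proposal is correct and takes essentially the same route as the paper: the paper likewise deduces the corollary from Proposition \ref{gb} by observing that the initial ideal is squarefree and invoking the standard equivalence with regular unimodular triangulations (citing \cite[Corollary 8.9]{Sturmfels}), together with the fact that a polytope admitting a unimodular triangulation is normal. The only difference is that you spell out the squarefreeness check for $G_1,\ldots,G_8$ explicitly, which the paper leaves implicit.
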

\begin{proof}
By Proposition \ref{gb}, the toric ideal $I$ has a squarefree initial ideal. 
This is equivalent to what $\Fc_0$ has a regular unimodular triangulation. 
(Consult, e.g., \cite[Corollary 8.9]{Sturmfels}.) 
In general, an integral convex polytope having a unimodular triangulation is normal. 
\end{proof}

The remaining facets of $\Pc_{h,d}$ are also normal. 
\begin{Lemma}\label{unimod}
The facets $\Fc_1$ and $\Fc_{j,i}$, where $j=2,\ldots,9$ and $i=2,\ldots,d-1$, are all normal. 
\end{Lemma}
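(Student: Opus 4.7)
The plan is a case-by-case analysis, much shorter than the treatment of $\Fc_0$ because each remaining facet has a far more restricted vertex set. First, for each hyperplane $\Hc_1$ and $\Hc_{j,i}$ with $j = 2,\ldots,9$ and $i = 2,\ldots,d-1$, I would enumerate the vertices of $\Pc_{h,d}$ lying on it by substituting each of $u_1,\ldots,u_{10},v_k,v_k'$ into the defining equation. A direct tally then splits the facets into two classes. The facets $\Fc_{3,i}, \Fc_{4,i}, \Fc_{5,i}, \Fc_{7,i}, \Fc_{8,i}$ turn out to be $(d-1)$-simplices with exactly $d$ vertices each: three of the $u_k$'s together with the $d-3$ vectors $v_j$ (resp.\ $v_j'$) for $j \neq i$. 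The remaining facets $\Fc_1, \Fc_{2,i}, \Fc_{6,i}, \Fc_{9,i}$ are non-simplicial, with $d+1$ or $2d-2$ vertices.

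For the simplicial facets, normality is equivalent to unimodularity of the simplex. I would translate a distinguished vertex to the origin and compute the determinant of the $(d-1) \times (d-1)$ matrix of edge vectors relative to the ambient lattice of the defining hyperplane. Since the $v_j$'s (resp.\ $v_j'$'s) contribute coordinate-vector edges, Gaussian elimination collapses the problem to a small minor in the coordinates $x_1, x_i, x_d$ involving the three contributing $u_k$'s; I expect each such minor to evaluate to $\pm 1$, giving unimodularity and hence normality.

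For the non-simplicial facets, I would exhibit an explicit unimodular triangulation. Each of them splits naturally into two lower-dimensional layers by slicing along a coordinate hyperplane (for example $\Fc_{2,i}$ decomposes by $x_d = 0$ versus $x_d > 0$, and $\Fc_1$ is a pyramid over a $(d-2)$-dimensional polytope with $d-1$ vertices), where each layer is, up to unimodular change of coordinates, a unit cube, a standard simplex, or a pyramid over one. Triangulating each layer via the standard pulling (or staircase) construction and gluing along the shared face produces a unimodular triangulation of the whole facet, so normality follows from \cite[Corollary 8.9]{Sturmfels} exactly as in Corollary~\ref{cor}.

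The principal obstacle is the $h$-dependence of the defining equations for $\Hc_{7,i}, \Hc_{8,i}, \Hc_{9,i}$. The contributing vertices $u_4, u_5, u_6$ scale linearly in $h$, so the edge-vector matrices carry $h$-dependent entries, and one must verify that the resulting determinants are nevertheless independent of $h$. This should work because $u_4, u_5, u_6$ are $h$-scaled lifts of $u_3, u_2$, so the row operations that triangularize the matrix absorb the $h$-factors cleanly, but checking this uniformly in $h$ and $i$ is the most delicate point of the proof.
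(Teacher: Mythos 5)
Your overall strategy (enumerate the vertices on each supporting hyperplane, split the facets into simplices and non-simplices, and prove normality via unimodular equivalence or unimodular triangulations) is the same as the paper's, and your vertex counts and the treatment of $\Fc_{4,i},\Fc_{7,i},\Fc_{8,i}$ and of the non-simplicial facets $\Fc_1,\Fc_{2,i},\Fc_{6,i},\Fc_{9,i}$ are sound. (The paper handles the latter by observing that the translated vertex configuration, e.g.\ $({\bf 0},-\eb_d,\eb_1,\eb_1-\eb_d,\eb_3,\ldots,\eb_{d-1},\eb_3-\eb_d,\ldots,\eb_{d-1}-\eb_d)$ for $\Fc_{2,2}$, is totally unimodular, which gives a unimodular triangulation without building it by hand; your slicing construction is a workable alternative.)

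There is, however, a genuine gap in your treatment of the simplicial facets. You assert that ``normality is equivalent to unimodularity of the simplex'' and expect every edge-vector determinant to be $\pm 1$. Both claims fail for $\Fc_{3,i}$ and $\Fc_{5,i}$. The facet $\Fc_{3,i}$ has vertex set $\{u_3,u_6,u_9\}\cup\{v_k: k\neq i\}$, and its edge from $u_3$ to $u_6$ carries the $h$ lattice points $u_{3,j}=(j+1)(\eb_2+\cdots+\eb_{d-1})+j\eb_d$, $j=0,\ldots,h-1$; likewise the edge of $\Fc_{5,i}$ from $u_2$ to $u_5$ carries the points $u_{2,j}$. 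Hence these simplices have an edge of lattice length $h-1$, their normalized volume is $h-1$, and the determinant you propose to compute equals $\pm(h-1)$, not $\pm 1$ (for $h\geq 3$ this is $\geq 2$). So the very point you flag as the ``delicate'' one --- that the $h$-factors should cancel --- is exactly where the argument breaks. The repair is short and is what the paper does: each of $\Fc_{3,i}$ and $\Fc_{5,i}$ is unimodularly equivalent to the simplex $\con(\{{\bf 0},\eb_1,\ldots,\eb_{d-2},(h-1)\eb_{d-1}\})$, which, although not unimodular, is normal --- for instance because subdividing the long edge at its interior lattice points yields the unimodular triangulation with maximal cells $\con(\{j\eb_{d-1},(j+1)\eb_{d-1},\eb_1,\ldots,\eb_{d-2}\})$, $j=0,\ldots,h-2$. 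With that case corrected, your proof plan goes through.
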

\begin{proof}
First, let us discuss the facets $\Fc_{4,i}, \Fc_{7,i}$ and $\Fc_{8,i}$. 
Fix $i=2$. Then the sets of vertices of $\Fc_{4,2}, \Fc_{7,2}$ and $\Fc_{8,2}$ 
are $\{u_1,u_7,u_9,v_3,\ldots,v_{d-1}\}$, $\{u_4,u_8,u_{10},v_3',\ldots,v_{d-1}'\}$ 
and $\{u_4,u_5,u_8,v_3',\ldots,v_{d-1}'\}$, respectively. 
Each of the matrices whose column vectors are the vertices of each facet 
can be transformed into the matrix $({\bf 0},\eb_1,\ldots,\eb_{d-1})$ by unimodular transformations. 
Thus each facet $\Fc_{4,2}, \Fc_{7,2}$ or $\Fc_{8,2}$ is 
unimodularly equivalent to a unit simplex of dimension $d-1$. 
Thus, in particular, $\Fc_{4,2}, \Fc_{7,2}$ and $\Fc_{8,2}$ are normal. 
Similarly, for any $i$, $\Fc_{4,i}, \Fc_{7,i}$ and $\Fc_{8,i}$ are normal. 

Next, let us investigate $\Fc_1,\Fc_{2,i},\Fc_{6,i}$ and $\Fc_{9,i}$. 
For $\Fc_{2,2}$, the set of its vertices is 
$$\{u_1,u_2,u_7,u_8,v_3,\ldots,v_{d-1},v_3',\ldots,v_{d-1}'\}.$$ 
By unimodular transformations, 
the matrix whose column vector is the above vertices 
can be transformed into the matrix 
$$({\bf 0}, -\eb_d, \eb_1, \eb_1-\eb_d, \eb_3,\ldots,\eb_{d-1},\eb_3-\eb_d,\ldots,\eb_{d-1}-\eb_d).$$ 
This is totally unimodular (\cite[Chapter 19]{Sch}). 
Thus, $\Fc_{2,2}$ has a unimodular triangulation. In particular, this is normal. 
Similarly, for any $i$, $\Fc_{2,i}$ is normal and so are $\Fc_1$, $\Fc_{6,i}$ and $\Fc_{9,i}$. 

Finally, let us consider the facets $\Fc_{3,i}$ and $\Fc_{5,i}$. 
Then one can see that each of them is unimodularly equivalent to 
the simplex whose vertex set is $\{{\bf 0},\eb_1,\ldots,\eb_{d-2},(h-1)\eb_{d-1}\}$. 
This is also normal, as desired. 
\end{proof}

\section{Holes of $\Pc_{h,d}$}\label{Hilb}


Let 
$$(u_j',2)=\frac{1}{2}((u_{1,j-1},1)+(u_{1,j},1)+(u_8,1)+(u_9,1)) \;\text{ for }\; j=1,\ldots,h. $$ 
Then $u_j'=\eb_1+j(\eb_2+\cdots+\eb_{d-1})+(j+2)\eb_d$ and 
each $(u_j',2)$ is contained in $\RR_{\geq 0}\Ac_{\Pc_{h,d}} \cap \ZZ^{d+1}$. 
On the other hand, since none of the points $(u_j',2)-(u_i,1)$, where $i=7,\ldots,10$, 
are contained in $\Ac_{\Pc_{h,d}}$, it must be $(u_j',2) \not\in \ZZ_{\geq 0}\Ac_{\Pc_{h,d}}$. 
Hence, the above $h$ integer points are holes of $\Pc_{h,d}$. 
In the rest of this section, we show that there is no more holes in 
\begin{eqnarray}\label{2part}
\{x \in \RR_{\geq 0} \Ac_{\Pc_{h,d}} \cap \ZZ^{d+1} : \deg(x)=2\}.
\end{eqnarray}

For nonnegative integers $n$ and $k$, let 
$$\Pc_{h,d}(n,k)=n \Pc_{h,d} \cap \{(x_1,\ldots,x_d) \in \RR^d : x_1=k\}.$$ 
For example, $\Pc_{h,d}(1,0)=\Fc_0$ and $\Pc_{h,d}(1,1)=\con(\{u_7,u_8,u_9,u_{10}\})$. 
Let $\Pc_0=\Pc_{h,d}(1,0)$ and $\Pc_1=\Pc_{h,d}(1,1)$. 

For a hyperplane $\Hc$ defined by the equality $a_1x_1+\cdots+a_dx_d = b$ and a positive integer $m$, 
we write $m\Hc^{(+)}$ (resp. $m\Hc^{(-)}$) for the closed half space 
defined by the inequality $a_1x_1+\cdots+a_dx_d \leq mb$ (resp. $a_1x_1+\cdots+a_dx_d \geq mb$). 


\begin{Lemma}\label{deg2}
Let $\Qc=\Pc_{h,d}(2,1)$. Then one has 
$$\Qc \cap \ZZ^d = \{{\bf a_0}+{\bf a_1} \in \ZZ^d : {\bf a_i} \in \Pc_i \cap \ZZ^d, i=0,1 \} 
\cup \{u_1',\ldots,u_h'\}.$$
\end{Lemma}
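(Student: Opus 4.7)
The plan is to prove both inclusions of the displayed equality.

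\textbf{Inclusion $\supseteq$.} This direction is routine. For any sum $\ab_0+\ab_1$ with $\ab_i\in\Pc_i\cap\ZZ^d$, the inclusions $\Pc_i\subseteq\Pc_{h,d}$ give $\ab_0+\ab_1\in 2\Pc_{h,d}$; the first coordinate is $0+1=1$, so the sum lies in $\Qc\cap\ZZ^d$. For each $u_j'$, the displayed convex combination expressing $(u_j',2)$ as a $\tfrac{1}{4}$-average of four lattice points of the form $(\beta,1)$ with $\beta\in\Pc_{h,d}$ witnesses $u_j'\in 2\Pc_{h,d}\cap\{x_1=1\}$, and integrality of $u_j'$ is read off from its formula.

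\textbf{Inclusion $\subseteq$.} The key preliminary step is the identity $\Qc=\Pc_0+\Pc_1$ as real polytopes. The vertices of $\Pc_{h,d}$ split by $x_1$-coordinate into those with $x_1=0$ (which are the vertices of $\Pc_0=\Fc_0$) and those with $x_1=1$ (which are $u_7,u_8,u_9,u_{10}$, the vertices of $\Pc_1$). Hence $\Pc_{h,d}=\con(\Pc_0\cup\Pc_1)$, and intersecting with $\{x_1=1/2\}$ produces $\tfrac{1}{2}(\Pc_0+\Pc_1)$; doubling yields $\Qc$. I would also verify $\Pc_1\cap\ZZ^d=\{u_7,u_8,u_9,u_{10}\}$ by parametrizing the parallelogram $\Pc_1$ and noting that $u_9-u_7=u_{10}-u_8=\eb_2+\cdots+\eb_{d-1}-4\eb_d$.

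Given $\alpha\in\Qc\cap\ZZ^d$, the strategy is to try each of the four candidate decompositions $\alpha=(\alpha-u_i)+u_i$ for $i\in\{7,8,9,10\}$: if any $\alpha-u_i$ lies in $\Pc_0\cap\ZZ^d$, then $\alpha$ is in the first component of the right-hand side; otherwise we must conclude $\alpha=u_j'$ for some $j\in\{1,\ldots,h\}$. I would split the argument into two cases depending on whether the middle coordinates $\alpha_2,\ldots,\alpha_{d-1}$ are all equal.

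In the diagonal case $\alpha=(1,c,\ldots,c,\alpha_d)$, the facet inequalities of $2\Pc_{h,d}$ specialized to the diagonal restrict $(c,\alpha_d)$ to an explicit finite region with $c\in\{0,1,\ldots,h+1\}$. The diagonal integer points of $\Pc_0$ are exactly $u_{1,c'},u_{2,c'},u_{3,c'-1}$ for $c'\in[0,h]$ (with the usual identifications at endpoints), i.e.\ the points $(0,c',\ldots,c',\alpha_d')$ with $\alpha_d'\in\{c'-1,c',c'+1\}\cap[0,h]$. Subtracting each $u_i$ for $i=7,8,9,10$ shows that the decomposable $\alpha_d$ values form the union $\{c-2,c-1,c,c+1\}\cup\{c+3,c+4,c+5,c+6\}$, intersected with the admissible range imposed by $\Qc$. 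The only gap inside the admissible range is $\alpha_d=c+2$ with $c\in[1,h]$, which is exactly $u_c'$.

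In the off-diagonal case, the asymmetric inequality $\Hc_{9,i}^{(+)}$ applied for $2\Pc_{h,d}$ at the index $i$ minimizing $\alpha_i$ becomes $\alpha_i+h\cdot T_i\leq h+1$, where $T_i=\sum_{j\neq i}\alpha_j-(d-3)\alpha_i\geq 1$ by the assumption that not all $\alpha_j$ are equal. This forces $\alpha_i=0$, and iterating the same style of argument with $\Hc_{9,\cdot}^{(+)}$ and $\Hc_{6,\cdot}^{(+)}$ at other indices reduces $\alpha$ to a small finite list of admissible shapes (essentially middle-coordinate vectors built from a few standard basis vectors), each of which decomposes directly via $u_1,u_2,v_i$, or $v_i'$ together with one of $u_7,\ldots,u_{10}$.

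The main obstacle will be the off-diagonal sub-case: one must squeeze enough structural information out of the $\Hc_{9,i}^{(+)}$ and $\Hc_{6,i}^{(+)}$ inequalities to enumerate the admissible middle-coordinate configurations and verify an explicit decomposition in each. Special care is needed at the small-$h$ (particularly $h=1$) and small-$d$ boundary cases, where several polytope vertices and inner lattice points collide.
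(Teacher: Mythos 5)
Your overall strategy coincides with the paper's: after the easy inclusion, restrict to $x=(1,x_2,\ldots,x_d)$, split according to whether $x_2=\cdots=x_{d-1}$, and use the facet inequalities of $2\Pc_{h,d}$ to pin down a finite list of points, checking decomposability (or identifying $u_j'$) in each case. Your preliminary observation that $\Qc=\Pc_0+\Pc_1$ as real polytopes, together with the unimodularity of the parallelogram $\Pc_1$, is a clean framing that the paper does not state explicitly, and your diagonal-case analysis (the union $\{c-2,\ldots,c+1\}\cup\{c+3,\ldots,c+6\}$ of decomposable values of $\alpha_d$, with the single gap at $c+2$ giving $u_c'$) matches the paper's first step in substance.

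There is, however, one concrete error in the off-diagonal case. From $2\Hc_{9,i}^{(+)}$ at a minimizing index $i$ you correctly get $\alpha_i+hT_i\leq h+1$ with $T_i\geq 1$, but this yields only $\alpha_i\leq 1$, not $\alpha_i=0$. The value $\alpha_i=1$ genuinely occurs: for instance $(1,2,1,\ldots,1,0)=v_2+u_9$ lies in $\Qc\cap\ZZ^d$, its middle coordinates are not all equal, and its minimum middle coordinate is $1$. If you literally discard this case, your enumeration of off-diagonal points is incomplete and the inclusion $\subseteq$ is not established for the family $(1,a+1,a,\ldots,a,x_d)$ with $a=1$ (which in the paper is handled via $2\Hc_1^{(-)}$ and $2\Hc_{7,3}^{(+)}$, giving $0\leq x_d\leq 2$ and a decomposition through $u_9$ or $u_{10}$). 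The fix is exactly the iteration you allude to: the inequality $2\Hc_{6,i}^{(+)}$ at the minimizing index gives $\sum_{\ell<m}p_\ell b_\ell\leq 1$ in the paper's notation, forcing the middle coordinates to take exactly two values $a$ and $a+1$ with $a+1$ occurring once, and then $2\Hc_{9,i}^{(+)}$ gives $a\in\{0,1\}$ --- both values must be carried through to the end.
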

\begin{proof}
Clearly, $\Qc \cap \ZZ^d 
\supset \{{\bf a_0}+{\bf a_1} \in \ZZ^d : {\bf a_i} \in \Pc_i \cap \ZZ^d, i=0,1 \} \cup \{u_1',\ldots,u_h'\}.$
Thus we may show the other inclusion. We remark that from \eqref{facets}, one has 
$$\Qc \subset 2 \Hc_0^{(-)} \cap 2 \Hc_1^{(-)} \cap 
\left(\bigcap_{\substack{2 \leq j \leq 9, \\ 2 \leq i \leq d-1}} 2 \Hc_{j,i}^{(+)} \right).$$

Let $x=(1,x_2,\ldots,x_d) \in \Qc \cap \ZZ^d$. \\
{\em The first step.} 
Assume that $x_2=x_3=\cdots=x_{d-1}$. 
Since $x \in 2 \Hc_{2,2}^{(+)} \cap 2 \Hc_{9,2}^{(+)}$, 
one has $0 \leq x_2 \leq h+1$. 
On the other hand, since $$x \in \bigcap_{\substack{j=3,4,5,7,8, \\ 2 \leq i \leq d-1}} 2\Hc_{j,i}^{(+)},$$ 
we have $$\max\{x_2-2, -4(x_2-1)\} \leq x_d \leq \min\{x_2+6,5h+5-4x_2, h+5\}.$$ 
One can verify that all of these are contained in 
$\{{\bf a_0}+{\bf a_1} \in \ZZ^d : {\bf a_i} \in \Pc_i \cap \ZZ^d, i=0,1 \} 
\cup \{u_1',\ldots,u_h'\}$. \\
{\em The second step.} 
Assume that $x$ does not satisfy $x_2=x_3=\cdots=x_{d-1}$. 
Let $a_1,\ldots,a_m$ be distinct $m$ integers such that 
$\{a_1,\ldots,a_m\}=\{x_2,\ldots,x_{d-1}\}$, where $a_1 > a_2 > \cdots > a_m$. 
Then $m \geq 2$. By $x \in \bigcap_{i=2}^{d-1} 2 \Hc_{2,i}^{(+)}$, 
we have $a_m \geq 0$. 
Let $p_\ell$ be the number of $a_\ell$'s among $x_2,\ldots,x_{d-1}$. 
Thus, $p_\ell>0$ and $p_1+\cdots+p_m=d-2$. For $\ell=1,\ldots,m-1$, let $b_\ell=a_\ell-a_m$. 
Then $b_\ell \geq m-\ell$. 
From $x \in \bigcap_{i=2}^{d-1} 2 \Hc_{6,i}^{(+)}$, we have 
\begin{eqnarray*}
&&-(d-3)a_m+(p_1a_1+\cdots+p_{m-1}a_{m-1}+(p_m-1)a_m) \\
&&\quad =-(d-3)a_m+(p_1(a_m+b_1)+\cdots+p_{m-1}(a_m+b_{m-1})+(p_m-1)a_m) \\
&&\quad =-(d-2)a_m+(p_1+\cdots+p_m)a_m+p_1b_1+\cdots+p_{m-1}b_{m-1} \\
&&\quad =p_1b_1+\cdots+p_{m-1}b_{m-1} \leq 1. 
\end{eqnarray*}
Hence, we obtain $m=2$ and $p_1=b_1=1$. Let, say, $x=(1,a_m+1,a_m,\ldots,a_m,x_d)$. 
Moreover, from $x \in \bigcap_{i=2}^{d-1} 2 \Hc_{9,i}^{(+)}$, we have 
$$-((d-3)h-1)a_m+h(a_m+1+(d-4)a_m)=h+a_m \leq h+1, $$ 
which implies that $a_m=0$ or 1. 
\begin{itemize}
\item When $x=(1,1,0,\ldots,0,x_d)$, since $x \in 2 \Hc_{4,3}^{(+)} \cap 2 \Hc_{5,3}^{(+)}$, 
we have $4 \leq x_d \leq 6$. 
\item When $x=(1,2,1,\ldots,1,x_d)$, since $x \in 2 \Hc_1^{(-)} \cap 2 \Hc_{7,3}^{(+)}$, 
we have $0 \leq x_d \leq 2$. 
\end{itemize}
All of these are contained in 
$\{{\bf a_0}+{\bf a_1} \in \ZZ^d : {\bf a_i} \in \Pc_i \cap \ZZ^d, i=0,1 \}$. 
Similarly, the integer points $x=(1,a_m,\ldots,a_m,x_d) + \eb_j$, 
where $a_m \in \{0,1\}$ and $j=3,\ldots,d-1$, 
are also contained there, as required. 
\end{proof}

Now, Corollary \ref{cor} says that $\Pc_0$ is normal. 
Moreover, since $\Pc_1$ is of dimension 2, this is also normal (\cite[Corollary 2.2.13]{CLS}). 
Thus, there is no hole in 
$$\{(x_1,\ldots,x_d,x_{d+1}) \in \RR_{\geq 0}\Ac_{\Pc_{h,d}} \cap \ZZ^{d+1} : x_1 \in \{0,2\}, x_{d+1}=2\}.$$

Therefore, there exist exactly $h$ holes contained in \eqref{2part}.

\section{The 3-normality of $\Pc_{h,d}$}\label{nohole}

In this section, we claim that there is no other hole except for $(u_j',2)$, $j=1,\ldots,h$. 
In other words, we prove that $\Pc_{h,d}$ is 3-normal.

Similar computations to Lemma \ref{deg2} enable us to show the following 
\begin{Lemma}\label{first}
One has 
\begin{itemize}
\item[(a)] $\Pc_{h,d}(3,1) \cap \ZZ^d = \{{\bf a_0}+{\bf a_0}'+{\bf a_1} \in \ZZ^d 
: {\bf a_0},{\bf a_0}' \in \Pc_0 \cap \ZZ^d , {\bf a_1} \in \Pc_1 \cap \ZZ^d\}$; 
\item[(b)] $\Pc_{h,d}(3,2) \cap \ZZ^d = \{{\bf a_0}+{\bf a_1}+{\bf a_1}' \in \ZZ^d 
: {\bf a_0} \in \Pc_0 \cap \ZZ^d , {\bf a_1},{\bf a_1}' \in \Pc_1 \cap \ZZ^d\}$; 
\item[(c)] $\Pc_{h,d}(4,1) \cap \ZZ^d = \{{\bf a_0}+{\bf a_0}'+{\bf a_0}''+{\bf a_1} \in \ZZ^d : 
{\bf a_0},{\bf a_0}',{\bf a_0}'' \in \Pc_0 \cap \ZZ^d , {\bf a_1} \in \Pc_1 \cap \ZZ^d\}$. 
\end{itemize}
\end{Lemma}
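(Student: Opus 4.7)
The plan is to imitate the facet-based case analysis of Lemma \ref{deg2} for each of the three slices. The inclusion $\supseteq$ is immediate, since $k_0 \Pc_0 + k_1 \Pc_1 \subseteq (k_0+k_1)\Pc_{h,d}$, so only $\subseteq$ requires argument. In each part, the strategy is to peel off one summand from a designated face: for (a) and (c), choose $a_1 \in \Pc_1 \cap \ZZ^d = \{u_7,u_8,u_9,u_{10}\}$ so that $x - a_1$ lands in $2\Pc_0 \cap \ZZ^d$ or $3\Pc_0 \cap \ZZ^d$, respectively, then decompose the remainder using the normality of $\Pc_0$ (Corollary \ref{cor}); for (b), choose $a_0 \in \Pc_0 \cap \ZZ^d$ so that $x - a_0 \in 2\Pc_1 \cap \ZZ^d$, then decompose by the two-dimensional normality of $\Pc_1$.

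Concretely, I intersect the scaled facet inequalities from \eqref{facets} with the relevant slice $x_1 = k$ and run the same two-stage split as in the proof of Lemma \ref{deg2}: first, the diagonal case $x_2 = x_3 = \cdots = x_{d-1} =: y$, where $\Hc_{2,i}, \Hc_{3,i}, \Hc_{4,i}, \Hc_{5,i}, \Hc_{7,i}, \Hc_{8,i}, \Hc_{9,i}$ (scaled by $n \in \{3,4\}$) confine $(y,x_d)$ to an explicit bounded rectangle; and second, the non-diagonal case, where $\Hc_{6,i}^{(+)}$ and $\Hc_{9,i}^{(+)}$ force $m = 2$ and $p_1 = b_1 = 1$, leaving only a short list of values for the common coordinate (at most one or two more than in Lemma \ref{deg2}, because of the larger dilation factor). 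For each resulting integer point, one then tests the four candidate subtractions $x - u_7, x - u_8, x - u_9, x - u_{10}$ against the scaled facet inequalities defining $2\Pc_0$ or $3\Pc_0$ (respectively, the various candidates $a_0$ against $2\Pc_1$).

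The main obstacle is verifying that no analogue of the degree-two holes $u_j'$ survives. Where Lemma \ref{deg2} was blocked by the fact that $u_j' - u_i \notin \Pc_0$ for every $i \in \{7,\ldots,10\}$, the present lemma enjoys an extra $\Pc_0$-summand (or $\Pc_1$-summand), and one verifies that for integer points $x$ near $u_j' + (\text{lattice point})$ the choice $a_1 = u_{10}$ typically succeeds, because $u_j' - u_{10} = (j-1)(\eb_2 + \cdots + \eb_{d-1}) + (j+1)\eb_d$ already lies on the boundary of $2\Pc_0$ and behaves well under further addition of vertices of $\Pc_0$. Confirming this, together with the finitely many remaining sub-cases (including the non-diagonal points that arise as $u_j' + \eb_\ell$ for $\ell \in \{2,\ldots,d-1\}$), is a routine but somewhat lengthy check on the scaled facet inequalities $\Hc_{3,i}^{(+)}, \Hc_{5,i}^{(+)}, \Hc_{6,i}^{(+)}, \Hc_{7,i}^{(+)}$ applied to the subtracted point. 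Part (b) is handled analogously, with $a_0 \in \Pc_0 \cap \ZZ^d$ in place of $a_1$ and the two-dimensional polygon $2\Pc_1$ as the target; the latter has only finitely many lattice points arranged along the diagonal $x_2 = \cdots = x_{d-1}$, which makes the residue check straightforward.
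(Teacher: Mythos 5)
Your proposal follows essentially the same route as the paper, which offers no written proof of this lemma beyond the remark that ``similar computations to Lemma~\ref{deg2}'' establish it: enumerate the lattice points of each slice via the dilated facet inequalities from \eqref{facets}, split into the diagonal and non-diagonal cases, and check that each point admits a summand in $\Pc_1\cap\ZZ^d$ (resp.\ $\Pc_0\cap\ZZ^d$) whose removal lands in $2\Pc_0$, $3\Pc_0$, or $2\Pc_1$, after which normality of $\Pc_0$ and $\Pc_1$ finishes. One small correction: in the non-diagonal case the bound becomes $p_1b_1+\cdots+p_{m-1}b_{m-1}\le n-k$, so for $(n,k)=(3,1)$ one still gets $m=2$ but $(p_1,b_1)$ may be $(1,2)$ or $(2,1)$, and for $(n,k)=(4,1)$ even $m=3$ is not excluded a priori; these extra subcases must be enumerated, though they do not affect the viability of the method.
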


Finally, we prove 
\begin{Lemma}
Let $n \geq 3$ and $0 \leq k \leq n$. For each $\alpha \in \Pc_{h,d}(n,k)$, we have 
\begin{eqnarray}\label{express}
\alpha={\bf a_0}^{(1)}+\cdots+{\bf a_0}^{(n-k)}+{\bf a_1}^{(1)}+\cdots+{\bf a_1}^{(k)},
\end{eqnarray}
where ${\bf a_0}^{(s)} \in \Pc_0 \cap \ZZ^d$ for $s=1,\ldots,n-k$ 
and ${\bf a_1}^{(t)} \in \Pc_1 \cap \ZZ^d$ for $t=1,\ldots,k$. 
That is to say, $\Pc_{h,d}$ is 3-normal. 
\end{Lemma}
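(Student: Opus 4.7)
The plan is to proceed by induction on $n \geq 3$. The base case $n = 3$ is handled as follows: for $k = 1$ and $k = 2$, Lemma~\ref{first}(a), (b) gives the required decomposition of $\alpha$ directly; for $k = 0$, $\alpha \in 3\Pc_0 \cap \ZZ^d$ decomposes by the normality of $\Pc_0$ (Corollary~\ref{cor}); and for $k = 3$, $\alpha \in 3\Pc_1 \cap \ZZ^d$ decomposes because $\Pc_1$ is a two-dimensional integral polytope and hence normal (cf.\ \cite[Corollary~2.2.13]{CLS}). It will also be convenient to treat $n = 4$, $k = 1$ as a base case via Lemma~\ref{first}(c).

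For the inductive step, fix $n \geq 4$ and assume the statement for all smaller values of the first index. The extremes $k = 0$ and $k = n$ are immediate from the normalities of $\Pc_0$ and $\Pc_1$. For $2 \leq k \leq n-1$, I would establish a peeling of a $\Pc_1$-summand: for each $\alpha \in \Pc_{h,d}(n, k) \cap \ZZ^d$ there exists $\beta \in \Pc_1 \cap \ZZ^d = \{u_7,u_8,u_9,u_{10}\}$ with $\alpha - \beta \in \Pc_{h,d}(n-1,k-1) \cap \ZZ^d$; then the inductive hypothesis decomposes $\alpha - \beta$ and prepending $\beta$ gives the desired decomposition of $\alpha$. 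For $k = 1$ and $n \geq 5$, an analogous peeling of a $\Pc_0$-summand reduces to $\Pc_{h,d}(n-1, 1)$, which is already covered.

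The main obstacle is proving these two peeling statements. By the facet description~\eqref{facets}, $\alpha \in \Pc_{h,d}(n,k) \cap \ZZ^d$ means every facet inequality of $\Pc_{h,d}$ holds after scaling by $n$; executing the peeling requires exhibiting $\beta$ such that all these inequalities remain valid at scale $n-1$ after subtraction. This reduces to a finite case analysis in the spirit of Lemma~\ref{deg2}. First one treats the constrained case $x_2 = \cdots = x_{d-1}$, where the remaining constraints on $x_d$ coming from $\Hc_{4,i}, \Hc_{5,i}, \Hc_{7,i}, \Hc_{8,i}$ can be checked by direct inspection. Then the inequalities attached to $\Hc_{6,i}$ and $\Hc_{9,i}$ force that when $x_2, \ldots, x_{d-1}$ are not all equal, $\alpha$ has one of only a short list of explicit forms (as in the second step of Lemma~\ref{deg2}), each of which is handled individually. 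The reason the degree-$2$ obstructions $u_j'$ do not reappear is that for $n \geq 3$ the extra $\Pc_0$ or $\Pc_1$ summand provides the slack needed to avoid the bad configurations identified in Section~\ref{Hilb}.
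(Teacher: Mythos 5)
Your overall architecture (induction on $n$, base cases from Lemma~\ref{first} and the normality of $\Pc_0$, $\Pc_1$, then peeling off a lattice point of $\Pc_0$ or $\Pc_1$) matches the paper's, but the content of the lemma is precisely the peeling step, and there your proposal has a genuine gap. You assert that for $2\le k\le n-1$ one can always peel a $\Pc_1$-summand $\beta\in\{u_7,u_8,u_9,u_{10}\}$ with $\alpha-\beta\in\Pc_{h,d}(n-1,k-1)$, and you defer the verification to ``a finite case analysis in the spirit of Lemma~\ref{deg2}.'' This claim is not only unproven; it appears to be false in one of the three regions the paper must distinguish. The paper splits according to whether $\alpha$ satisfies \eqref{condition} (peel $u_8$), satisfies \eqref{condi1} (peel $u_9$, after first deriving the auxiliary inequalities \eqref{newcondition} from $\alpha\not\in n\Fc_{2,i}\cup n\Fc_{6,i}\cup n\Fc_{9,i}$), or neither. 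In the third region one has $\alpha_d\ge 5$ together with some $\ell'$ for which $-(d-4)\alpha_{\ell'}+\sum_{j\ne\ell'}\alpha_j-\alpha_d\ge n-5$, and there each candidate in $\Pc_1\cap\ZZ^d$ breaks some facet inequality at level $n-1$: subtracting $u_7$ or $u_8$ pushes the $\Hc_{3,\ell'}$-value above $n-1$, while subtracting $u_9$ or $u_{10}$ violates the $\Hc_{5,i}$- or $\Hc_{8,i}$-bounds unless \eqref{newcondition} holds, which is exactly what fails outside \eqref{condi1}. The paper escapes by peeling the $\Pc_0$-point $v_\ell=\eb_\ell$ there (legitimate because the induction then lands in $\Pc_{h,d}(n-1,k)$ with $k$ unchanged), so your bookkeeping ``$k\ge 2$ implies peel from $\Pc_1$'' is the wrong organizing principle.

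A second, smaller problem: you suggest that the inequalities from $\Hc_{6,i}$ and $\Hc_{9,i}$ again confine $\alpha$ to a short explicit list when $x_2,\dots,x_{d-1}$ are not all equal, ``as in the second step of Lemma~\ref{deg2}.'' That argument is special to $n=2$, $k=1$: there one gets $p_1b_1+\cdots+p_{m-1}b_{m-1}\le 1$, which forces $m=2$ and $p_1=b_1=1$. For general $n$ the same computation only gives a bound of $n-1$, which does not pin down the shape of $\alpha$, so the classification strategy of Lemma~\ref{deg2} does not transfer. The missing ideas are thus the three-way case division on $\alpha_d$ and the $\Hc_{3,i}$-values, the derivation of \eqref{newcondition} in the middle case, and the recognition that the third case requires peeling a $\Pc_0$-point chosen via the index $\ell$ with $\alpha_\ell-\alpha_{\ell'}\ge 1$.
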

\begin{proof}
Fix $\alpha=(\alpha_1,\ldots,\alpha_d) \in \Pc_{h,d}(n,k)$, where $\alpha_1=k$. 
Since $\Pc_0$ and $\Pc_1$ are normal, we may assume that $1 \leq k \leq n-1$. 
Moreover, thanks to Lemma \ref{first}, we may also assume that 
$n \geq 4, k \geq 2$ or $n \geq 5, k=1$. 
In addition, by Lemma \ref{unimod}, we may also assume that 
$$\alpha \not\in n\Fc_0 \cup n\Fc_1 \cup 
\left(\bigcup_{\substack{2 \leq j \leq 9, \\ 2 \leq i \leq d-1}} n\Fc_{j,i} \right).$$ 
We will proceed our discussions by induction on $n$. \\
{\em The first step.} 
Suppose that $\alpha$ satisfies the following $(d-1)$ inequalities: 
\begin{align}\label{condition}
\alpha_d \geq 5 \;\;\text{and}\;\; 
 -(d-4)\alpha_i+\sum_{j \not= i, 2 \leq j \leq d-1}\alpha_j-\alpha_d \leq n-6 
\;\;\text{for}\;\;i=2,\ldots,d-1. 
\end{align}
Let $\beta=\alpha-u_8=(\alpha_1-1,\alpha_2,\ldots,\alpha_{d-1},\alpha_d-5)$. 
Then we have $\beta \in \Pc_{h,d}(n-1,k-1) \cap \ZZ^d$. 
In fact, one can easily see that for $i=2,\ldots,d-1$, we have 
\begin{itemize}
\item $\alpha_1-1=k-1 \geq 0$; 
\item $\alpha_d-5 \geq 0$ by \eqref{condition}; 
\item $\alpha_i \geq 0$; 
\item $-(d-4)\alpha_i+\sum_{j\not=i, 2\leq j \leq d-1}\alpha_j-(\alpha_d-5) 
\leq n-6 +5=n-1$ by \eqref{condition}; 
\item $4(\alpha_1-1)-4\alpha_i-(\alpha-5) \leq -1-4+5=0$ since $\alpha \not\in n\Fc_{4,i}$; 
\item $-4(\alpha_1-1)-\alpha_i+\alpha_d-5 \leq n-1$; 
\item $\alpha_1-1-(d-3)\alpha_i+\sum_{j\not=i, 2\leq j \leq d-1}\alpha_j \leq n-1$; 
\item $(5h-5)(\alpha_1-1)-((d-3)(5h-1)-4)\alpha_i+(5h-1)\sum_{j\not=i, 2 \leq j \leq d-1}\alpha_j + \alpha_d-5 
\leq 5hn-(5h-5)-5 = 5h(n-1)$; 
\item $(h-5)(\alpha_1-1)-(d-3)(h-1)\alpha_i+(h-1)\sum_{j\not=i, 2 \leq j \leq d-1}\alpha_j + \alpha_d-5 
\leq hn-(h-5)-5=h(n-1)$; 
\item $(h-1)(\alpha_1-1)-((d-3)h-1)\alpha_i+h\sum_{j\not=i, 2 \leq j \leq d-1}\alpha_j \leq hn-1-(h-1) = h(n-1)$ 
since $\alpha \not\in n\Fc_{9,i}$. 
\end{itemize}
The above estimations imply that $\beta \in (n-1)\Pc_{h,d} \cap \ZZ^d$ because of \eqref{facets}. 
By the hypothesis of induction, we can obtain the required expression on $\alpha$ like \eqref{express}. \\
{\em The second step.} 
Suppose that $\alpha$ satisfies either 
\begin{eqnarray}\label{condi1}
\alpha_d \leq 4 \;\text{ or }\;
-(d-4)\alpha_i+\sum_{\substack{j \not= i, \\ 2 \leq j \leq d-1}}\alpha_j-\alpha_d \geq n-5 
\;\text{for}\;i=2,\ldots,d-1. 
\end{eqnarray}
Then we obtain the new inequalities 
\begin{eqnarray}\label{newcondition}
&& -4\alpha_1-\alpha_i+\alpha_d \leq n -6, \\
\nonumber 
&& (h-5)\alpha_1-(d-3)(h-1) \alpha_i+(h-1)\sum_{j\not=i,2\leq j \leq d-1}\alpha_j+\alpha_d 
\leq hn-5
\end{eqnarray}
for $i=2,\ldots,d-1$ as follows. 
\begin{itemize}
\item[(i)] First, suppose that $\alpha$ satisfies the left-hand condition of \eqref{condi1}. 
Since $\alpha \not\in n \Fc_{2,i}$, 
one has $\alpha_i \geq 1 \geq 10-4k-n$ 
from our assumption $n \geq 4, k \geq 2$ or $n \geq 5, k=1$. Thus we obtain 
\begin{align*}
-4\alpha_1-\alpha_i+\alpha_d \leq -4k-10+4k+n+4=n-6. 
\end{align*}
Moreover, since $\alpha \not\in n\Fc_{6,i}$, one has 
$\alpha_1-(d-3)\alpha_i + \sum_{j\not=i, 2 \leq j \leq d-1}\alpha_j \leq n-k-1$. 
Hence $(h-1)(\alpha_1-(d-3)\alpha_i + \sum_{j\not=i, 2 \leq j \leq d-1}\alpha_j) \leq (h-1)(n-1)+4k+h+n-10$. 
Remark that $h \geq 1$. Thus we also obtain 
\begin{align*}
(h-5)\alpha_1&-(d-3)(h-1)\alpha_i+(h-1)\sum \alpha_j +\alpha_d \\
&\quad\quad\quad\leq -4\alpha_1+(h-1)(n-1)+4k+h+n-10+4= hn-5. 
\end{align*}
\item[(ii)] Second, suppose that $\alpha$ satisfies the right-hand condition of \eqref{condi1}. 
Since $\alpha \not\in n\Fc_{6,i}$, one has 
$-4\alpha_1 - (d-3)\alpha_i + \sum \alpha_j \leq n-5k-1 \leq n-5k-1+n+5k-10=2n-11.$ Thus we obtain 
\begin{align*}
-4\alpha_1-\alpha_i+\alpha_d &= -4\alpha_1-(d-3)\alpha_i+\sum \alpha_j-(-(d-4)\alpha_i+\sum \alpha_j-\alpha_d) \\
&\leq 2n-11-n+5=n-6.
\end{align*}
Moreover, since $\alpha \not\in n\Fc_{9,i}$, one has 
$(h-1)\alpha_1-((d-3)h-1)\alpha_i+h\sum \alpha_j \leq hn-1 \leq hn-1+4k+n-9=hn+4k+n-10$. 
Thus we obtain 
\begin{align*}
&(h-5)\alpha_1-(d-3)(h-1)\alpha_i+(h-1)\sum \alpha_j +\alpha_d = \\
&\quad\quad -4\alpha_1+(h-1)\alpha_1-((d-3)h-1)\alpha_i+h\sum \alpha_j -(-(d-4)\alpha_i+\sum \alpha_j-\alpha_d) \\
&\quad\quad \leq -4k+hn+4k+n-10-n+5=hn-5. 
\end{align*}
\end{itemize}
Let $\beta'=\alpha-u_9$. If we assume that $\alpha$ satisfies \eqref{condi1} 
then similar to the first step, 
we can verify that $\beta' \in (n-1)\Pc_{h,d} \cap \ZZ^d$. 
Here we use \eqref{newcondition} and the normality of 
some facets of $\Pc_{h,d}$ in the same way as the first step. \\
{\em The third step.} 
Suppose that $\alpha$ satisfies neither \eqref{condition} nor \eqref{condi1}. 
When this is the case, one has $d \geq 4$ and 
there exist $\ell$ and $\ell'$ with $2 \leq \ell \not= \ell' \leq d-1$ 
such that the inequalities 
\begin{eqnarray*}
\quad\quad -(d-4)\alpha_{\ell}+\sum\alpha_j-\alpha_d \leq n-6 \;\text{ and }\; 
-(d-4)\alpha_{\ell'}+\sum\alpha_j-\alpha_d \geq n-5 
\end{eqnarray*}
are satisfied. 
It then follows that $(d-3)(\alpha_{\ell}-\alpha_{\ell'}) \geq 1$, 
i.e., $\alpha_{\ell}-\alpha_{\ell'} \geq 1$. Let $\beta''=\alpha-v_{\ell}$. 
Then, similarly, we can verify that $\beta'' \in (n-1)\Pc_{h,d} \cap \ZZ^d$ 
by using $\alpha_{\ell}-\alpha_{\ell'} \geq 1$ and the normality 
of some facets of $\Pc_{h,d}$, as desired. 
\end{proof}

\end{document}